\documentclass[a4paper,french,oneside,11pt]{amsart}

\usepackage[utf8]{inputenc}
\usepackage[T1]{fontenc}
\usepackage[hscale=0.7, vscale=0.7]{geometry}
\usepackage[numbers]{natbib}

\usepackage{babel}
\frenchsetup{og=«,fg=»}

\usepackage{amssymb}

\usepackage{mathtools}

\DeclareFontFamily{OMX}{mlmex}{}
\DeclareFontShape{OMX}{mlmex}{m}{n}{%
   <->mlmex10%
   }{}%
\usepackage{mlmodern}

\theoremstyle{plain}
\newtheorem{thm}{Théorème}
\newtheorem{prop}[thm]{Proposition}

\theoremstyle{definition}

\newtheorem{rem}{Remarque}

\usepackage{color}

\usepackage[np,autolanguage]{numprint}

\usepackage{hyperref}
\hypersetup{%
  colorlinks=true,%
  citecolor=[RGB]{120,29,126},
  pdfauthor={Jean-François Burnol},%
  pdfsubject={Kempner series},%
  pdfstartview=FitH,%
  pdfpagemode=UseNone,%
}

\newcommand\NN{\mathbb{N}}

\def\leq{\leqslant}
\def\geq{\geqslant}

\allowdisplaybreaks

\usepackage{setspace}

\title[Asymptotique des sommes de Kempner]
  {Sur l'asymptotique des sommes de Kempner pour de grandes bases}

\author[J.-F. Burnol]{Jean-François Burnol}

\address{Université de Lille, Faculté des Sciences et technologies,
  Département de mathématiques, Cité Scientifique, F-59655 Villeneuve d'Ascq
  cedex, France} 

\email{jean-francois.burnol@univ-lille.fr} 

\date{May 20, 2024. This version differs from v2 at \url{https://arxiv.org/abs/2403.01957}
  only via a largely rewritten introduction, added references, the use of ``one-half''
  line spacing and a change of typeface.}

\subjclass[2020]{Primary 11Y60, 11M06; Secondary 11A63, 11B37, 30C10, 41A60, 44A60;}
\keywords{Ellipsephic numbers, Kempner series, asymptotic expansions}

\begin{document}

\singlespacing

\begin{abstract}
  Soit $K$ la somme des inverses des entiers n'ayant pas le chiffre $b-1$ en
  base $b$.  Nous montrons $K = b\log(b) - A/b - B/b^2-C/b^3+O(1/b^4)$ avec
  $A=\zeta(2)/2$, $B = (3\zeta(2)+\zeta(3))/3$ et $C =
    (2\zeta(2)+4\zeta(3)+\zeta(4))/4$.
\vspace*{\baselineskip}

    \begin{otherlanguage}{english}
\noindent      \textsl{English summary.}  Let $K$ be the sum of the reciprocals of
      positive integers which are missing the digit $b-1$ in their $b$-ary
      representation.  We prove $K = b\log(b) - A/b - B/b^2-C/b^3+O(1/b^4)$
      where $A=\zeta(2)/2$, $B = (3\zeta(2)+\zeta(3))/3$ and $C =
      (2\zeta(2)+4\zeta(3)+\zeta(4))/4$.
    \end{otherlanguage}
\end{abstract}

\maketitle

\onehalfspacing

\section{Théorème principal}

Kempner \cite{kempner} a remarqué en 1914 que la sous-série de la série
harmonique obtenue en ne conservant que les dénominateurs n'ayant aucune
occurrence du chiffre $9$ en base $10$ était convergente.  En effet il n'y a pour
chaque $\ell\geq1$ que $8\cdot 9^{\ell-1}$ termes $m^{-1}$ avec
$10^{\ell-1}\leq m<10^{\ell}$.
%
%
Notons $K(b,d)$ la sous-série de la série harmonique obéissant à la
restriction de Kempner, pour une base $b\geq2$ générale et un chiffre
$d\in\{0,\dots,b-1\}$.

Fine  \cite{segalleppfine1970} et d'autres auteurs, répondant à une question de
Segal et Lepp, ont montré $K(b,0) = b\log(b) + O(b^{-1})$.  Par ailleurs 
Kl{\o}ve \cite{klove1971} a obtenu $K(b,d)=b\log(b)+O(b)$ pour tout $d$.%
\footnote{Dans son article, Kl{\o}ve considère plus généralement des
  restrictions sur $m\geq1$ chiffres.  Comme nous l'avons montré dans
  \cite{burnoldigamma} (et vérifié numériquement), le terme principal est
  $b\log(b)/m$, et non pas $b\log(b)$.  Le résultat de Kl{\o}ve n'est correct
  que pour $m=1$.}
Ces résultats ont été améliorés par l'auteur dans \cite{burnolasymptotic} qui
montre pour tout $d>0$ fixé $K(b,d)=b\log(b)-b\log(1+d^{-1})+O_d(b^{-1})$ et
donne cinq termes supplémentaires du développement en puissances inverses de
$b$, pour tout $d>0$ ou $d=0$ fixé indépendamment de la base $b$.

D'autres régimes asymptotiques mènent au terme principal $b\log(b)$: Farhi
\cite{farhi} a montré $\lim_{k\to\infty} I(b,d,k) = b\log(b)$ pour tout $b$ et
$d$.  Ici $I(b,d,k)$ est la série harmonique avec des dénominateurs ayant
exactement $k$ fois le chiffre $d$ en base $b$, de sorte que $K(b,d) =
I(b,d,0)$.  De telles séries avaient été définies par Irwin \cite{irwin} peu
après l'observation de Kempner.  Allouche, Hu et Morin
\cite{allouchehumorin2024} ont considéré plus généralemeent la contrainte
qu'un bloc $w$ de $|w|$ chiffres apparaisse exactement $k$ fois.  Ils ont
montré que la limite pour $k$ tendant vers l'infini est alors
$b^{|w|}\log(b)$.  L'auteur a ensuite obtenu ce même résultat par une méthode
complètement différente \cite{burnolblocks}.  La vérification numérique n'est
pas facile (le calcul direct de la somme des séries est pour presque tous les
cas impossible à cause de la lenteur de la convergence) et les algorithmes
pour ce-faire ne sont disponibles dans la littérature que pour des cas
particuliers (\cite{baillie1979}, \cite{schmelzerbaillie}, \cite{baillie2008},
\cite{burnolone42}).

Les entiers ne possédant que certains chiffres dans une base $b$ donnée
(entiers «ellipséphiques» suivant une terminologie de C.~Mauduit) ont
fait l'objet de travaux avancés en théorie analytique des nombres, et Maynard
a montré qu'il y a toujours une infinité de nombres premiers de cette forme.
Nous n'abordons nullement ces sujets ici mais citons
\cite{erdosmauduitsarkozy1998}, \cite{dartygemauduit2000},
\cite{maynardInventiones2019} pour l'intérêt du lecteur.

Dans le présent article nous étudions la généralisation directe de la série
originelle de Kempner aux couples $(b,b-1)$ plutôt que $(10,9)$.  Voici notre
résultat:
\begin{thm}\label{thm:1}
  Soit $b>1$ un entier et $K(b,b-1)$ la somme des inverses $m^{-1}$, portant sur
  tous les entiers strictement positifs ne contenant dans leur écriture en
  base $b$ aucune occurrence du chiffre $b-1$.

  Alors, pour $b$ tendant vers l'infini:
\begin{equation}\label{eq:main}
  K(b,b-1) = b \log(b) 
- \frac{\zeta(2)}{2 b} 
- \frac{3\zeta(2)+\zeta(3)}{3b^{2}} 
- \frac{2\zeta(2)+4\zeta(3)+\zeta(4)}{4b^3}
+  \frac{O(1)}{b^{4}}
\end{equation}
\end{thm}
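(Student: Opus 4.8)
The plan is to exploit the self-similar structure of the set $\mathcal{S}$ of positive integers that avoid the digit $b-1$, by deriving a functional equation for its Dirichlet series $\eta(s):=\sum_{m\in\mathcal{S}}m^{-s}$, which converges for $\Re s>\log(b-1)/\log b$ and in particular at $s=1$, where $\eta(1)=K(b,b-1)$; the behaviour as $b\to\infty$ is then read off from that equation. First I would split each $m\in\mathcal{S}$ according to its last base-$b$ digit, writing $m=a+bm'$ with $a\in\{0,\dots,b-2\}$ and $m'\in\mathcal{S}\cup\{0\}$. Since $a\leq b-2<bm'$ whenever $m'\geq1$, the binomial series for $(a+bm')^{-s}$ in powers of $a/(bm')$ converges, and an easy absolute-convergence check licenses the rearrangement of sums, yielding the exact identity
\begin{equation*}
\Bigl(1-\frac{b-1}{b^{s}}\Bigr)\eta(s)=\sum_{a=1}^{b-2}a^{-s}+\sum_{k\geq1}\binom{-s}{k}\frac{\sigma_{k}}{b^{s+k}}\,\eta(s+k),\qquad \sigma_{k}:=\sum_{a=0}^{b-2}a^{k}.
\end{equation*}
At $s=1$, where $1-(b-1)/b=1/b$, this becomes the clean starting point
\begin{equation*}
K(b,b-1)=b\,H_{b-2}+\sum_{k\geq1}(-1)^{k}\frac{\sigma_{k}}{b^{k}}\,\eta(k+1),\qquad H_{n}:=\sum_{j=1}^{n}\frac1j.
\end{equation*}

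Next I would write $\eta(k+1)=\zeta(k+1)-\delta(k+1)$, where $\delta(s):=\zeta(s)-\eta(s)=\sum m^{-s}$ runs over the integers that \emph{do} contain the digit $b-1$, and treat the two pieces separately. The $\zeta$-piece has a closed form: interchanging the $k$- and $a$-summations (legitimate since $\zeta(k+1)\leq\zeta(2)$ and $\sum_{k\geq1}\sigma_{k}b^{-k}=\sum_{a=0}^{b-2}a/(b-a)<\infty$) and invoking the standard expansion $\sum_{k\geq1}(-1)^{k}\zeta(k+1)x^{k}=-\gamma-\psi(1+x)$, Gauss's multiplication formula $\sum_{a=0}^{b-1}\psi(1+a/b)=b\,\psi(b)-b\log b$, and $\psi(b)=H_{b-1}-\gamma$, one obtains
\begin{equation*}
\sum_{k\geq1}(-1)^{k}\frac{\sigma_{k}}{b^{k}}\,\zeta(k+1)=b\log b-b\,H_{b-1}+\gamma+\psi\!\Bigl(2-\tfrac1b\Bigr).
\end{equation*}
Combining this with the previous display and using $b(H_{b-2}-H_{b-1})=-b/(b-1)$, I would then expand $-b/(b-1)=-\sum_{j\geq0}b^{-j}$ and the digamma Taylor series $\psi(2-\tfrac1b)=(1-\gamma)-\sum_{j\geq1}(\zeta(j+1)-1)b^{-j}$ (coming from $\psi^{(j)}(2)=(-1)^{j+1}j!\,(\zeta(j+1)-1)$). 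A complete cancellation of all the $\gamma$'s and all the integer constants then leaves
\begin{equation*}
K(b,b-1)=b\log b-\frac{\zeta(2)}{b}-\frac{\zeta(3)}{b^{2}}-\frac{\zeta(4)}{b^{3}}-\Sigma+O(b^{-4}),\qquad \Sigma:=\sum_{k\geq1}(-1)^{k}\frac{\sigma_{k}}{b^{k}}\,\delta(k+1).
\end{equation*}
It now remains precisely to prove that $\Sigma=-\dfrac{\zeta(2)}{2b}+\dfrac{\zeta(2)-\frac23\zeta(3)}{b^{2}}+\dfrac{\tfrac12\zeta(2)+\zeta(3)-\tfrac34\zeta(4)}{b^{3}}+O(b^{-4})$, which is exactly what converts this provisional formula into \eqref{eq:main}.

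This last step is the crux and the expected main obstacle. Since $\sigma_{k}/b^{k}\leq b/(k+1)$ while $\delta(n)\leq 2^{n+1}b^{-n}+O(b^{-2(n-1)})$ for $n\geq2$ (the smallest integer containing the digit $b-1$ being $b-1$ itself, the next one $2b-1$, the rest much larger), the terms of $\Sigma$ with $k\geq4$ sum to $O(b^{-4})$ and may be discarded; because of the factors $\sigma_{k}/b^{k}\sim b/(k+1)$ one then needs $\delta(2),\delta(3),\delta(4)$ to absolute precision $O(b^{-5})$. I would pin these down by applying the same last-digit decomposition to the complementary set: the integers whose last digit is $b-1$ form exactly $\{jb-1:j\geq1\}$ and contribute $\sum_{j\geq1}(jb-1)^{-s}=b^{-s}\zeta(s,1-\tfrac1b)=\sum_{i\geq0}(i+1)\zeta(s+i)\,b^{-(s+i)}$ (a Hurwitz-zeta value), while the remaining ones feed a recursion identical in form to the one for $\eta$:
\begin{equation*}
\Bigl(1-\frac{b-1}{b^{s}}\Bigr)\delta(s)=b^{-s}\,\zeta\!\Bigl(s,1-\tfrac1b\Bigr)+\sum_{j\geq1}\binom{-s}{j}\frac{\sigma_{j}}{b^{s+j}}\,\delta(s+j).
\end{equation*}
At $s=2,3,4$ the correction sum on the right is already $O(b^{-4})$ and can be dropped, so $\delta(s)$ is determined to the required order from the Hurwitz-zeta value alone; substituting back into $\Sigma$ and collecting powers of $b$ produces the three coefficients, hence \eqref{eq:main}. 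The genuinely delicate points in executing this are the uniform-in-$b$ control of the whole tail $\sum_{k\geq1}$, so that truncation at $k=3$ is justified, and the verification that the integers whose digit $b-1$ lies away from the units place contribute merely $O(b^{-5})$ to each of $\delta(2),\delta(3),\delta(4)$.
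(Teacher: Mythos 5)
Your route is genuinely different from the paper's: the paper starts from the moment formula \eqref{eq:K} and devotes most of its effort to a uniform-in-$m$ four-term expansion of the rational functions $v_m$ in powers of $1/b$, whereas you set up a Fischer-type functional equation for the Dirichlet series $\eta(s)$ of the digit-avoiding integers and evaluate the $\zeta$-part in closed form via the digamma multiplication formula (a route the paper explicitly mentions, via \cite{burnoldigamma}, but does not pursue). Your first half checks out completely: the functional equation, its specialization at $s=1$, the digamma computation, and the cancellations leading to $K(b,b-1)=b\log b-\sum_{j\geq1}\zeta(j+1)b^{-j}-\Sigma+O(b^{-4})$ are all correct, as are the reduction to the stated target expansion of $\Sigma$ and the truncation of $\Sigma$ at $k=3$.

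The final step, however, rests on two quantitative claims that are false and that, taken literally, yield wrong coefficients. First, the integers whose digit $b-1$ sits away from the units place do \emph{not} contribute $O(b^{-5})$ to $\delta(2)$: already the $b-1$ two-digit numbers $b^2-b+a$, $0\leq a\leq b-2$, contribute about $b\cdot b^{-4}=b^{-3}$, and the full such contribution is $\zeta(2)b^{-3}+O(b^{-4})$ (it is exactly the term $\tfrac{b-1}{b^2}\delta(2)+\mathrm{corr}$ that your own recursion isolates). Since $\delta(2)$ is multiplied by $\sigma_1/b\sim b/2$ inside $\Sigma$, approximating $\delta(2)$ by the Hurwitz-zeta value alone shifts the $b^{-2}$ coefficient of $K$ by $\zeta(2)/2$ — the answer is already wrong at second order. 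Second, the correction sum in the $\delta$-recursion at $s=2$ equals $-2\sigma_1 b^{-3}\delta(3)+\dots=-\zeta(3)b^{-4}+O(b^{-5})$; since you yourself note that $\delta(2)$ is needed to within $O(b^{-5})$, declaring this sum droppable because it is ``$O(b^{-4})$'' is self-contradictory, and dropping it shifts the $b^{-3}$ coefficient by $\zeta(3)/2$. (There is also a small slip: $b^{-s}\zeta(s,1-\tfrac1b)=\sum_{i\geq0}\binom{s+i-1}{i}\zeta(s+i)b^{-s-i}$, the coefficient being $i+1$ only for $s=2$.) The plan is salvageable: using your exact recursion with the factor $(1-(b-1)b^{-s})^{-1}$ expanded to the required order and with the $j=1$ term retained at $s=2$, one finds $\delta(2)=\zeta(2)b^{-2}+(\zeta(2)+2\zeta(3))b^{-3}+(\zeta(3)+3\zeta(4))b^{-4}+O(b^{-5})$, $\delta(3)=\zeta(3)b^{-3}+3\zeta(4)b^{-4}+O(b^{-5})$, $\delta(4)=\zeta(4)b^{-4}+O(b^{-5})$, and these do reproduce the target expansion of $\Sigma$, hence \eqref{eq:main}; but as written your error bookkeeping is the step that fails.
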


Ce n'est pas la première fois que les $\zeta(n)$ apparaissent dans
la théorie des séries de Kempner.  En effet
Fischer \cite{fischer} a démontré en 1993 la formule suivante:
\begin{equation}\label{eq:fischer}
  K(10,9) = 10 \log(10) - \sum_{m=1}^\infty 10^{-m-1} \beta_{m} \zeta(m+1)
\end{equation}
Dans cette formule les coefficients $\beta_m$ sont des nombres rationnels
vérifient les récurrences linéaires suivantes pour $m\geq1$ (en particulier
$\beta_0=10$):
\begin{equation}\label{eq:fischerrec}
  \sum_{k=1}^m \binom{m}{k}\bigl(10^{m-k+1}-10^k + 1\bigr)\beta_{m-k} = 10\,(11^m -10^m)\;.
\end{equation}

Comme nous l'expliquons dans un travail \cite{burnoldigamma} rédigé
postérieurement à celui-ci les coefficients $\beta_m$ sont des cas
particuliers des quantités dénotées $v_m$, fractions rationnelles en la
variable $b$, qui ont été définies par l'auteur dans \cite{burnolirwin}, pour
toute base et tout ensemble de chiffres exclus.  Et il est possible d'obtenir
le Théorème \ref{thm:1} à partir d'une version généralisant la série
\eqref{eq:fischer} de Fischer: \cite{burnoldigamma} explique la méthode mais
ne donne que deux termes, car il faut les considérations plus avancées de
\cite{burnolasymptotic} pour aller commodément plus loin.

Ici nous démontrerons le Théorème \ref{thm:1} d'une manière totalement
différente, en utilisant comme point de départ \cite{burnolirwin}.  Ceci est
l'occasion d'obtenir certains résultats sur les $v_m$ vues comme fonctions
analytiques de la base $b$, et de les employer.

\section{Remarques préliminaires}

%
%
Nous utilisons \cite[Thm. 4]{burnolirwin} (avec le
paramètre de ``niveau'' $\ell$ mis à la valeur $1$).  Ce théorème nous dit
dans le cas particulier $d=b-1$:
\begin{equation}\label{eq:K}
  K(b,b-1) = \sum_{d=1}^{b-2}\sum_{m=1}^\infty\frac{v_m}{(d+1)^{m+1}}
\end{equation}
\begin{rem}
  Pour $b=2$, $d=1$, on a $K(2,1)=0$ puisqu'il n'y a
  pas d'entier non nul n'utilisant pas le chiffre $1$ en écriture binaire, ce
  qui est compatible avec la formule ci-dessus, la somme sur $d$ étant alors
  vide.  Les quantités $v_m$ sont aussi définies pour $b=2$, elles sont dans
  ce cas toutes égales à $2$.
\end{rem}

Ces $v_m$ (qui ne dépendent ici que de $b$) vérifient les récurrences suivantes:
\begin{equation}\label{eq:vm}
  \forall m\geq0\quad (b^{m+1}-b+1)v_m = b^{m+1}+\sum_{j=1}^m\binom{m}{j}\gamma_jv_{m-j}
\end{equation}
où les symboles $\gamma_j$ sont définis par
  $\forall j\geq0\quad \gamma_j = \sum_{d=1}^{b-1} d^j$.
On fera attention que $\gamma_j$ est utilisé ici pour désigner ce qui était
noté $\gamma_j'$ dans \cite[Thm. 4]{burnolirwin}.  Les $v_m$ vérifient
\emph{aussi} une généralisation à toute base $b$ de la récurrence
\eqref{eq:fischerrec}, voir \cite{burnoldigamma}.  Mais nous ne l'utiliserons
pas.  La quantité $d' =
b-1-(b-1)$ de \cite[Thm. 4]{burnolirwin} vaut ici $0$. L'équation
\eqref{eq:vm} est aussi valable pour $m=0$ en considérant que la somme est
vide donc nulle.  Elle donne $v_0=b$.  On notera que $\gamma_0 = b-1$ et qu'on
a parfois avantage à écrire \eqref{eq:vm} avec $(b-1)v_m$ déplacé à droite.

Dans la suite nous allons traiter les $\gamma_j$ comme des polynômes de degré
$j+1$ en la variable $b$.  Rappelons la formule connue avec les nombres de
Bernoulli, valable pour tout $j\geq1$ entier (mais pas ici pour $j=0$ puisque
notre $\gamma_0$ vaut $b-1$):
\begin{equation*}
  \gamma_j = \sum_{p=0}^j \binom{j}{p}B_p\frac{b^{j+1-p}}{j+1-p}
\end{equation*}

Nous passerons bientôt de la variable $b$ à la variable $c=b^{-1}$, et
considérerons les $\gamma_j$ comme des fractions rationnelles en $c$.  Dans ce
cadre nous pourrons nous ramener à des polynômes en multipliant par $c^{j+1}$,
c'est-à-dire en considérant $c^{j+1}\gamma_j$.  On a en effet:
\begin{equation}\label{eq:bernoulli}
  c^{j+1}\gamma_j =  \sum_{p=0}^j \binom{j}{p}B_p\frac{c^p}{j+1-p} =
  \frac{1}{j+1} - \frac{c}{2} + \frac{j c^2}{12} - \frac{j(j-1)(j-2)c^4}{720} + \dots + B_j c^j
\end{equation}
De plus lorsque $c=1/b$, $b\in\NN$, $b\geq2$, on a  l'encadrement connu
\begin{equation}
  \label{eq:id2}
  \frac1{j+1} - \frac c2 \leq c^{j+1} \gamma_j \leq \frac1{j+1} -\frac c2 +
  \frac{j c^2}{12}
\end{equation}
qui résulte de la comparaison avec l'intégrale de la fonction $t\mapsto t^j$
(comme l'on pratique au début du développement d'Euler-MacLaurin par exemple).
On prendra garde qu'on ne peut pas étendre en général ces inégalités à la
variable réelle $c\in[0,\frac12]$ ni même l'encadrement ``au cran d'avant'' $0\leq
(j+1)c^{j+1} \gamma_j\leq 1$.


Nous aurons également besoin du résultat du calcul suivant, d'abord mené pour
$c=1/b$, $b\in\NN$, $b\geq2$. Soit $m$ un entier naturel:
\begin{align*}
  \sum_{j=0}^m \binom{m}{j}\frac{\gamma_j}{m+1-j}
&=\sum_{j=0}^m \frac1{m+1}\binom{m+1}{j}\sum_{d=1}^{b-1}d^j\\
&=\frac1{m+1}\sum_{d=1}^{b-1}((d+1)^{m+1}-d^{m+1}) = \frac{b^{m+1} - 1}{m+1}
\end{align*}
Donc pour $c=1/b$:
\begin{equation}\label{eq:id1}
   c^{m+1}\sum_{j=0}^m \binom{m}{j} \frac{\gamma_j}{m+1-j} = \frac{1-c^{m+1}}{m+1}
\end{equation}
Comme l'équation \eqref{eq:id1} compare deux fonctions polynomiales en la
variable $c$ et dit qu'elles coïncident en tous les $c=1/b$, $b\geq2$ entier,
c'est donc une identité pour toutes les valeurs complexes de
$c$.


\section{Estimation de moments}

Afin d'aider à suivre les démonstrations qui suivent, nous donnons ici
les débuts des développements en puissances de $c = b^{-1}$ au voisinage de
zéro, des $v_m$ pour $1\leq m \leq 6$:
\begin{align*}
  v_1&
  \begin{aligned}[t]
    &=\frac b2 + 1 + \frac{c}2\frac{1-2c}{1-c+c^2} \\
   &= \frac b2 +1 + \frac 12 c - \frac12 c^2 - c^3 + O(c^4)
  \end{aligned}
  \\
v_2 &
\begin{aligned}[t]
  &= \frac b3 + 1 +  \frac{c}{6}\frac{6-5c-7c^2+10c^3-6c^4}{(1-c+c^2)(1-c^2+c^3)}\\
  &= \frac b3 + 1 + c + \frac16 c^2 - c^3+ O(c^4)
\end{aligned}
\\
v_3 &= \frac b4 + 1 + c + \frac12 c^2 + 0 c^3 + O(c^4)\\
v_4 &= \frac b5 + 1 + c + \frac12 c^2 + \frac12 c^3 +O(c^4)\\
v_5 &= \frac b6 + 1 + c + \frac12 c^2 + \frac34 c^3 + O(c^4)\\
v_6 &= \frac b7 + 1 + c + \frac12 c^2 + c^3 + O(c^4)
\end{align*}
\begin{prop}
  La fraction rationnelle $v_m$ en la variable complexe $c=b^{-1}$, définie
  pour $m\geq0$ par les identités récurrentes \eqref{eq:vm}, possède un pôle
  simple en $c=0$ de résidu $(m+1)^{-1}$.  Elle n'a pas d'autres pôles dans le
  disque ouvert de rayon $\rho$ avec
$\rho\approx\np{0.755}$
  la racine réelle
  de l'équation $\rho^2(1+\rho) = 1$.  Et:
\begin{equation}\label{eq:taylor}
  m\geq 4\implies v_m = \frac{1}{(m+1)c} + 1  + c + \frac12 c^2 + \frac{m-2}{4} c^3 + O_{c\to0}(c^4) 
\end{equation}
\end{prop}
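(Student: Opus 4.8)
The plan is to work directly from the recurrence \eqref{eq:vm}, solving it for $v_m$ by dividing by $(b^{m+1}-b+1)$, and then passing to the variable $c=b^{-1}$. Multiplying \eqref{eq:vm} through by $c^{m+1}$ and using the rational-function form of $\gamma_j$ (via \eqref{eq:bernoulli}), I would write
\begin{equation*}
  (1-(b-1)c^{m+1})\,v_m = \frac1{c^{m+1}}\cdot c^{m+1} + \sum_{j=1}^m\binom{m}{j}\gamma_j v_{m-j},
\end{equation*}
or rather, after multiplying the whole of \eqref{eq:vm} by $c^{m+1}$ and isolating $v_m$,
\begin{equation*}
  v_m = \frac{1 + c^{m+1}\sum_{j=1}^m\binom{m}{j}\gamma_j v_{m-j}}{1 - c^{m+1}(b-1)} \cdot \frac{1}{?}
\end{equation*}
— more precisely one divides \eqref{eq:vm} by $c^{-(m+1)}-(b-1)$, i.e. by $b^{m+1}-b+1$. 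The key structural observation is that $b^{m+1}-b+1 = c^{-(m+1)}(1 - c^m(1-c))$, so
\begin{equation*}
  v_m = \frac{c^{-(m+1)} + \sum_{j=1}^m\binom{m}{j}\gamma_j v_{m-j}}{c^{-(m+1)}-b+1}
      = \frac{1 + c^{m+1}\sum_{j=1}^m\binom{m}{j}\gamma_j v_{m-j}}{1 - c^m(1-c)}.
\end{equation*}
From this closed form everything follows. The denominator $1-c^m(1-c)$ is a polynomial in $c$ whose roots are bounded away from $0$: its smallest-modulus root tends to $1$ as $m\to\infty$, and in any case $|c^m(1-c)|<1$ throughout $|c|<\rho$ once one checks the worst case $m=1$, where $1-c(1-c)=1-c+c^2$ has roots of modulus $1$, and $m=2$, where $1-c^2+c^3$ has a real root at exactly $\rho$ with $\rho^2(1+\rho)=1$. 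For $m\ge3$ the root of $1-c^m(1-c)$ of least modulus is even larger, so $\rho$ is governed by the $m=2$ case. Thus by induction on $m$, assuming $v_{m-j}$ is holomorphic on $|c|<\rho$ except for a simple pole at $0$ of residue $(m-j+1)^{-1}$, the numerator $1+c^{m+1}\sum_j\binom{m}{j}\gamma_j v_{m-j}$ is holomorphic on $|c|<\rho$ (each $c^{j+1}\gamma_j$ is a polynomial by \eqref{eq:bernoulli}, and $c^{m-j+1}v_{m-j}$ is holomorphic there since it kills the pole, leaving a factor $c^{m+1}\gamma_j v_{m-j}=c^{j+1}\gamma_j\cdot c^{m-j}v_{m-j}$ which is $c^{-1}\times$holomorphic — so one must be slightly careful and write $c^{m+1}\gamma_j v_{m-j} = (c^{j+1}\gamma_j)(c^{m-j}v_{m-j})$, the second factor holomorphic with value $(m-j+1)^{-1}$ at $0$), and dividing by $1-c^m(1-c)$ (nonvanishing at $0$) preserves holomorphy away from $0$ and the simple pole structure at $0$.

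For the residue at $c=0$: from the closed form, $c\cdot v_m \to$ (value at $0$ of numerator, divided by value at $0$ of denominator) — but the numerator does not vanish at $0$, so I instead track the Laurent expansion directly. Write $c^{m+1}\gamma_j v_{m-j} = (c^{j+1}\gamma_j)(c^{m-j}v_{m-j})$; at $c=0$ the first factor is $\frac1{j+1}$ (from \eqref{eq:bernoulli}) and the second is $\frac1{m-j+1}$ (the residue of $v_{m-j}$, using that $c^{m-j}v_{m-j}=c^{m-j-1}(c\,v_{m-j})$... again for $m-j\ge1$ this is $c^{m-j-1}\cdot(\tfrac1{m-j+1}+O(c))$; for $m-j=0$, $v_0=b=c^{-1}$ so $c^0 v_0 = c^{-1}$, which needs separate bookkeeping). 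To avoid these case distinctions I would rather use the identity \eqref{eq:id1}, which is exactly the statement $c^{m+1}\sum_{j=0}^m\binom mj\frac{\gamma_j}{m+1-j}=\frac{1-c^{m+1}}{m+1}$: this suggests defining $w_m = v_m - \frac{1}{(m+1)c}$ and showing by induction that $w_m$ is holomorphic at $0$. Substituting $v_{m-j}=\frac{1}{(m-j+1)c}+w_{m-j}$ into the recurrence and using \eqref{eq:id1} to cancel the polar parts is the cleanest route; the leftover is a recurrence for $w_m$ with holomorphic data, giving holomorphy of $w_m$ on $|c|<\rho$ at one stroke and pinning the residue of $v_m$ to $(m+1)^{-1}$.

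Finally, for the explicit expansion \eqref{eq:taylor}: I would run the $w_m$-recurrence to order $c^3$. The point is that for $m\ge4$ the denominator contributes $1-c^m(1-c) = 1+O(c^4)$, i.e. it is $\equiv 1 \pmod{c^4}$, so to the order we care about $v_m$ equals the numerator $c^{-(m+1)}/(b^{m+1}-b+1)$-adjusted expression — concretely $w_m \equiv \bigl(1+c^{m+1}\sum_{j\ge1}\binom mj\gamma_j v_{m-j}\bigr)/1 - \frac1{(m+1)c}\pmod{c^4}$, and inside the sum only small $j$ (namely $j=1,2,3$, and the $j=m,m-1,\dots$ tail which is $O(c^4)$ for $m\ge4$) contribute modulo $c^4$; the surviving terms involve $c^{j+1}\gamma_j$ for $j\le3$, whose expansions are read off from \eqref{eq:bernoulli} ($c^2\gamma_1=\frac12-\frac c2$, $c^3\gamma_2=\frac13-\frac c2+\frac{c^2}{12}$, $c^4\gamma_3=\frac14-\frac c2+\frac{c^2}{4}$), multiplied against the already-known $c^{m-j}v_{m-j}$ expansions — and here the induction hypothesis \eqref{eq:taylor} for indices $\ge4$ plus the tabulated $v_1,v_2,v_3$ close the loop. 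The $\frac{m-2}{4}c^3$ coefficient should emerge linearly in $m$ precisely because the $m$-dependence enters only through a binomial coefficient like $\binom m1$ or $\binom m2$ times a fixed low-order quantity; tracking that linear dependence carefully is the one place to be attentive. \textbf{The main obstacle} I anticipate is not conceptual but bookkeeping: making the induction base cases ($m=4,5,6$, already tabulated above) mesh correctly with the general step, and ensuring the ``tail'' terms $j$ near $m$ in the sum $\sum_{j=1}^m\binom mj\gamma_j v_{m-j}$ really are $O(c^4)$ uniformly — this uses $c^{m+1}\gamma_j v_{m-j}$ with $m-j$ small, where $v_{m-j}=O(c^{-1})$ and $\gamma_j = O(c^{-(j+1)})=O(c^{-(m+1)}\cdot c^{m-j})$, so the product is $O(c^{m-j})$, which is $O(c^4)$ only for $j\le m-4$; the genuinely finitely many remaining terms $j\in\{m-3,\dots,m\}$ must be expanded by hand using $v_0,v_1,v_2,v_3$, and one checks their contribution to the $c^3$-coefficient is accounted for in the $\frac{m-2}4$ — in fact for $m\ge 7$ these tail terms are $O(c^4)$ too, so only $m\in\{4,5,6\}$ (the table) need the hand check.
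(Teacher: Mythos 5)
Your framework coincides with the paper's: you subtract the polar part $\frac{1}{(m+1)c}$, invoke \eqref{eq:id1} to cancel the poles coming from the indices $j$ near $m$ (including $j=m$, where $v_0=c^{-1}$), identify the denominator $1-c^m+c^{m+1}=1-c^m(1-c)$, and locate its zeros via $\rho^2(1+\rho)=1$ (the real root is $-\rho$, not $\rho$, but this is immaterial). Up to and including the statement about the poles and the residue, your plan is sound and is essentially what the paper does.

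The expansion \eqref{eq:taylor} is where the proposal breaks down, and the problem is not mere bookkeeping. In the recurrence \eqref{eq:wm} the $j$-th summand is $\binom{m}{j}\,(c^{j+1}\gamma_j)\,(c^{m-j}w_{m-j})$; since $c^{j+1}\gamma_j$ has nonzero constant term $\frac{1}{j+1}$ and $w_{m-j}(0)=1$, this summand vanishes to order exactly $m-j$ at $c=0$. Hence the terms that create the coefficients of $c$, $c^2$, $c^3$ are those with $j=m-1,m-2,m-3$ — the ones involving $w_1,w_2,w_3$ and the $m$-dependent factors $\binom{m}{j}c^{j+1}\gamma_j$ — whereas for $m$ large the terms with $j\leq 3$ contribute nothing modulo $c^4$. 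You assert the opposite in the middle of your argument (``only small $j$, namely $j=1,2,3$, survive''), then correctly switch to $j\in\{m-3,\dots,m\}$ in your last paragraph, but then claim that for $m\geq 7$ these terms are $O(c^4)$. That claim is false: for instance the $j=m-1$ term is $m\,c^{m}\gamma_{m-1}\cdot c\,w_1$ with $m\,c^{m}\gamma_{m-1}=1-\frac m2c+\dots$ by \eqref{eq:bernoulli} and $c\,w_1=c+\frac12c^2+\dots$, so it begins at order $c^1$ for every $m$. Were your claim true, one would get $v_m=\frac{1}{(m+1)c}+1+O(c^4)$ for all $m\geq7$, contradicting \eqref{eq:taylor} itself. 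As written, your plan therefore establishes the expansion only for $m=4,5,6$ and has no mechanism to reach general $m$. The missing idea — the heart of the paper's proof — is that the coefficients of $1,c,c^2,c^3$ in the three surviving summands are polynomials in $m$ of degree at most $2$; note that the $m$-dependence enters not only through the binomial coefficients, as you suggest, but also through $\gamma_{m-1},\gamma_{m-2},\gamma_{m-3}$ via \eqref{eq:bernoulli}, and both dependences are polynomial of controlled degree. A polynomial of degree $\leq2$ being determined by three values, the verifications at $m=4,5,6$ then do yield \eqref{eq:taylor} for all $m\geq4$.
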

\begin{proof}
  On considère les fractions rationnelles $w_m = v_m - \frac{b}{m+1}$.  Alors,
  par \eqref{eq:vm} et en utilisant initialement une somme pour $j$ allant de $0$ à $m$ au
  lieu de $1$ à $m$ (on rappelle que $\gamma_0 = b-1$):
  \begin{align*}
\frac{b^{m+2}}{m+1} + b^{m+1} w_m &= b^{m+1} + \sum_{j=0}^m\binom{m}{j}\frac{b\gamma_j}{m+1-j} 
    +  \sum_{j=0}^m\binom{m}{j} \gamma_j w_{m-j}\\
\frac{1}{(m+1)c} + w_m &= 1 + \frac1c\sum_{j=0}^m\binom{m}{j}c^{m-j}\frac{c^{j+1}\gamma_j}{m+1-j}
   +  \sum_{j=0}^m\binom{m}{j} c^{j+1}\gamma_j c^{m-j} w_{m-j}\\
\frac{1}{(m+1)c} + w_m &= 1 +  \frac{1-c^{m+1}}{(m+1)c} +  \sum_{j=0}^m\binom{m}{j} c^{j+1}\gamma_j c^{m-j} w_{m-j}
  \end{align*}
  On a utilisé \eqref{eq:id1}.  De plus on a $w_0 = b - b = 0$ et $c \gamma_0
  = 1 -c$.  Donc au final, pour $m\geq1$ (si $m=1$ la somme est vide donc
  nulle):
\begin{equation}\label{eq:wm}
  (1 - c^m + c^{m+1})w_m = 1 -(m+1)^{-1} c^m + \sum_{j=1}^{m-1}\binom{m}{j} c^{j+1}\gamma_j c^{m-j} w_{m-j}
\end{equation}
En particulier $w_1 = \frac{1 - c/2}{1 - c + c^2}$ a ses pôles sur le cercle
unité.  Le polynôme $1 - c^2 + c^3$ a une seule racine réelle $-\rho$ avec
$\rho$ vérifiant $\rho^2(1+\rho)=1$ et valant un peu plus de $3/4$.  Les deux
autres racines complexes conjuguées sont en dehors du disque unité.  Lorsque
$z$ est un complexe avec $|z|<\rho$ et $m\geq2$ on a $|z^m-z^{m+1}|\leq
|z|^2(1+|z|)<\rho^2(1+\rho) = 1$.  Donc les pôles des $1 -c^m + c^{m+1}$ sont
tous en dehors du disque ouvert centré en l'origine de rayon $\rho$.  Les
fractions rationnelles $w_m$ sont donc des fonctions analytiques dans ce
disque ouvert, ce qui montre que $v_m$ comme fonction de $c$ a un pôle simple
en zéro de résidu $1/(m+1)$.

On suppose dorénavant que $m$ est au moins $4$.  On sépare les contributions
de $j=m-1$, $j=m-2$, $j=m-3$ à \eqref{eq:wm}.
On obtient
\begin{equation*}
  (1 - c^m + c^{m+1})w_m = 1 - (m+1)^{-1} c^m  
   + c^{m+1}\Bigl( m \gamma_{m-1} w_1 + \binom{m}2 \gamma_{m-2} w_2 + \binom{m}{3} \gamma_{m-3} w_3\Bigr)
  + \Gamma
\end{equation*}
avec $\Gamma$ la somme de $j=1$ à $m-4$ des $c^{j+1}\gamma_jc^{m-j}w_{m-j}$ et
$m-j\geq4$. Ce terme $\Gamma$ ne contribue pas aux 4 premiers coefficients de
Taylor du terme de droite de l'équation ci-dessus, pas plus que
$(m+1)^{-1}c^m$.  Diviser par $1 - c^m + c^{m+1}$ ne changera pas non plus ce
début de développement de Taylor $A(m) + B(m) c + C(m) c^2 + D(m) c^3$.  On a
immédiatement $A(m)=1$ puisque $c^{m+1}\gamma_{m-i}$ pour $i=1$, $2$, $3$ est
un polynôme nul en zéro.  D'après \eqref{eq:bernoulli}, les coefficients de
$c \cdot c^{m}m\gamma_{m-1} \cdot w_1$ jusqu'à $c^3$ seront polynomiaux en $m$ de
degrés au plus $2$.  De même $m(m-1) c^2 c^{m-1}\gamma_{m-2}w_2$ contribue lui
aussi de manière au plus quadratique en $m$.  Et $m(m-1)(m-2) c^3
c^{m-2}\gamma_{m-3} w_3$ ne contribue qu'un multiple de $m(m-1)$ au terme en
$c^3$.  Il en résulte que $A(m)$, $B(m)$, et $C(m)$ sont tous trois des
polynômes en $m$ au pire quadratique.  Mais avec un logiciel de calcul formel
ou à la main on vérifie les développements qui ont été donnés plus haut et qui
donnent $A(m)=B(m)=1$ pour $m=4,5,6$, et $C(m) = (m-2)/4$ pour $m = 4, 5, 6$.
D'où \eqref{eq:taylor}.
\end{proof}
\begin{thm}\label{thm:estimation}
  Il existe une constante absolue $K$ (effective) telle que:
  \begin{equation*}
    m\geq4, b\in\NN, b\geq2, c=b^{-1} \implies
    \left|v_m - \frac b{m+1} - 1 - c - \frac12 c^2 - \frac{m-2}4 c^3 \right|\leq K
      m^2 c^4
  \end{equation*}
\end{thm}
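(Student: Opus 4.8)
The plan is to upgrade the qualitative statement of the Proposition — that the first four Taylor coefficients of $v_m - \frac{b}{m+1}$ stabilize — into a quantitative bound on the error term, uniform in $m$ and in $b$. The starting point is the recurrence \eqref{eq:wm} for $w_m = v_m - \frac{b}{m+1}$, together with the separation used in the proof of the Proposition:
\begin{equation*}
  (1 - c^m + c^{m+1})w_m = 1 - \frac{c^m}{m+1}
   + c^{m+1}\!\left( m \gamma_{m-1} w_1 + \binom{m}2 \gamma_{m-2} w_2 + \binom{m}{3} \gamma_{m-3} w_3\right) + \Gamma,
\end{equation*}
with $\Gamma = \sum_{j=1}^{m-4}\binom{m}{j} c^{j+1}\gamma_j\, c^{m-j} w_{m-j}$. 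First I would establish a crude but uniform a priori bound of the form $|w_n| \leq M$ for a constant $M$ and all $n\geq 1$, $b\geq 2$ — this follows from \eqref{eq:wm} by induction on $n$, since $c^{j+1}\gamma_j$ is bounded by \eqref{eq:id2} (its value is between $\frac1{j+1}-\frac c2$ and $\frac1{j+1}-\frac c2+\frac{jc^2}{12}$, hence $O(1/(j+1))$ uniformly for $c\in[0,\tfrac12]$), and the prefactor $(1-c^m+c^{m+1})^{-1}$ is bounded by $2$ say for $b\geq 2$, $m\geq1$.

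Next I would control each piece. For $m\geq4$ the term $c^m/(m+1)$ is $O(c^4)$ with no $m$-dependence worse than a constant, since $m\geq4$. For $\Gamma$, every summand carries a factor $c^{m+1}\cdot c^{-j}\cdot c^{j}$... more precisely $c^{j+1}\cdot c^{m-j} = c^{m+1}$, times $\gamma_j \leq (b-1)b^j/(j+1)\cdot(\text{const})$ so that $c^{j+1}\gamma_j = O(1/(j+1))$, leaving $\binom{m}{j}c^{m-j}\cdot O(1/(j+1))\cdot O(1) $; since $m-j\geq4$ throughout, $c^{m-j}\leq c^4$, and $\sum_{j=1}^{m-4}\binom{m}{j}/(j+1)$ is dominated by $2^m/m$, but this must be offset against the $c^{m-j}$ — here one uses $\binom{m}{j}c^{m-j}\leq \binom{m}{j}2^{-(m-j)}\leq (3/2)^m \cdot 2^{-m}\cdot\dots$; the cleaner route is to bound $\binom{m}{j}c^{m-j}\gamma_j c^{j+1}$ directly by noting $\binom{m}{j}\gamma_j c^{m+1}\leq \binom{m}{j}\frac{(b-1)b^j}{j+1}b^{-(m+1)}\leq \frac{1}{j+1}\binom{m}{j}b^{j-m}\leq \frac{1}{j+1}\binom{m}{j}2^{-(m-j)}$, and $\sum_{m-j\geq4}\binom{m}{j}2^{-(m-j)} = O(m^2 2^{-4})$ — wait, that sum is actually $(3/2)^m$, too large; so instead I keep the full factor $c^{m-j}$ and use $\binom{m}{j}c^{m-j}\leq m^{m-j}c^{m-j} = (mc)^{m-j}$ which for $b\geq m$ is $\leq 1$ and contributes geometrically, while for $b<m$ one falls back on the a priori bound and the fact that $c^{m-j}\leq c^4$ still forces $\Gamma = O(m^2 c^4)$ after summing $\binom{m}{j}\leq 2^m$ against $c^{m-j}$ with $m-j$ ranging — the honest statement is $|\Gamma|\leq M\sum_{j=1}^{m-4}\binom{m}{j}\frac{c^{m-j}}{j+1}$, and since each term has $c^{m-j}$ with exponent at least $4$, pulling out $c^4$ and bounding the rest by $\sum_j \binom{m}{j}\leq 2^m$ is too lossy; the right bound uses that $\binom{m}{j}c^{m-j} = \binom{m}{m-j}c^{m-j}\leq \frac{m^{m-j}}{(m-j)!}c^{m-j}$, so $|\Gamma|\leq M c^4\sum_{k\geq4}\frac{m^k c^{k-4}}{k!}\leq Mc^4 e^{mc}\leq Mc^4 e^{m/2}$ — still exponential. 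I expect the resolution (and \textbf{this is the main obstacle}) to be that one does \emph{not} need $\Gamma$ small for all $b$, only for $b$ not too small relative to $m$; for $b\leq$ some function of $m$ the claimed inequality $|v_m - \cdots|\leq Km^2c^4$ is weak because $c^4$ times $m^2$ is then not small, and it can be absorbed into a coarse bound; for $b$ large compared to $m$, the factor $(mc)^{m-j}$ is genuinely geometric and $\Gamma = O(c^5)$. So the argument splits into the regime $b\geq m$ (or $b\geq 2m$), where $\Gamma$ and all higher corrections are negligible and the three explicit terms $m\gamma_{m-1}w_1 + \binom m2\gamma_{m-2}w_2 + \binom m3\gamma_{m-3}w_3$ are computed to order $c^3$ using \eqref{eq:bernoulli} and the known expansions of $w_1, w_2, w_3$, giving exactly $1 + c + \frac12 c^2 + \frac{m-2}4 c^3$ with a remainder that is polynomial in $m$ of degree $\leq 2$ times $c^4$; and the complementary regime $b < m$, handled by the a priori bound $|w_m|\leq M$ together with $\frac1{m+1}, 1, c, \frac12 c^2, \frac{m-2}4 c^3$ all being themselves $O(m)$, whence the difference is $O(m)\leq O(m^2 c^4 \cdot b^4)\cdot\dots$ — more carefully, when $b<m$ one has $m^2 c^4 = m^2/b^4 > m^2/m^4 = 1/m^2$... this does not obviously dominate a constant.

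Therefore the cleanest execution keeps the division by $1-c^m+c^{m+1}$ explicit from the start and argues as in the Proposition that $w_m - (1 + c + \tfrac12 c^2 + \tfrac{m-2}4 c^3)$ equals (exactly) a rational function whose Taylor coefficients at $0$ vanish through order $c^3$; then a contour integral $\frac{1}{2\pi i}\oint_{|c|=r} \frac{w_m(c) - P_m(c)}{c^4}\frac{dc}{c-z}$ over a fixed circle $0<r<\rho$ (legitimate since all $w_m$ are analytic in $|c|<\rho$ by the Proposition) bounds the order-$c^4$ remainder by $r^{-4}\max_{|c|=r}|w_m(c) - P_m(c)|$, and on $|c|=r$ one has $|w_m|\leq M'$ uniformly (from the recurrence and geometric decay of the coefficients $c^{j+1}\gamma_j$ on the compact circle) while $|P_m(c)| = O(m)$; so the remainder is $O(m)\cdot r^{-4} = O(m)$, \emph{not} $O(m^2c^4)$ — but evaluating at the specific points $c = 1/b$ with $b\geq 2$, i.e. on $[0,\tfrac12]\subset(0,\rho)$, and using that we already know the remainder is $O(c^4)$ pointwise, an interpolation between the contour bound and Cauchy estimates at scale $c$ yields $O(m^2 c^4)$: the quadratic-in-$m$ dependence comes precisely from the $\binom m2\gamma_{m-2}w_2$ term, whose order-$c^2$ Taylor coefficient multiplied out contributes $\binom m2 \cdot (\text{coeff of }c^{m-1}\text{ in }c^{m-1}\gamma_{m-2})\cdot(\text{const})$, i.e. $O(m^2)$, into the $c^3$ and then $c^4$ slots. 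I would finish by writing $v_m - \frac{b}{m+1} - 1 - c - \frac12 c^2 - \frac{m-2}4 c^3 = c^4 R_m(c)$ with $R_m$ a rational function analytic in $|c|<\rho$, applying the Cauchy estimate $|R_m(c)|\leq r^{-N}\max_{|\zeta|=r}|R_m(\zeta)|$ on a fixed circle, and bounding $\max_{|\zeta|=r}|R_m(\zeta)|$ by $K m^2$ via the recurrence \eqref{eq:wm} — the factor $m^2$ being visibly the worst growth any single separated term (the $\binom m2$ one) can inject, and $\Gamma$ contributing only $O(1)$ on the fixed circle since there $c^{m-j}$ with $m-j\geq4$ and $|c|=r<\rho<1$ decays geometrically faster than $\binom mj$ grows once one also uses the $\frac{1}{j+1}$ from $c^{j+1}\gamma_j$ and sums $\sum_j \binom mj r^{m-j}/(j+1)\leq (1+r)^m/m$, which is $\leq e^{rm}$, still too big — so honestly $\Gamma$ on the fixed circle is controlled only after noting $w_{m-j}$ itself is $O(1)$ and the product telescopes, and I expect to need the sharper observation that $\sum_{j}\binom mj r^{m-j}\gamma_j c^{j+1}$, with $c$ the \emph{variable} on $|c|=r$, is bounded by evaluating $\sum_d (1 + r\cdot(dc))^{?}$... in short, \textbf{the genuine difficulty is getting a clean uniform-in-$m$ bound on $\Gamma$}, and I anticipate resolving it by replacing the fixed circle with $|c| = r/m$ — no, rather by summing \eqref{eq:wm} and recognizing the convolution structure: define $S = \sup_{m\geq1}\sup_{|c|\leq r}|w_m(c)|$ and $T = \sup_{m\geq1}\sup_{|c|\leq r}|R_m(c)|/m^2$; the recurrence then gives an inequality $T\leq K_0 + K_1 T \cdot(\text{small})$ closing the induction, provided $r$ is chosen small enough (but still positive and fixed), which is exactly where the hypothesis $m\geq4$ and the three explicit separated terms are used to extract the leading $m^2$ and leave a strictly contractive tail. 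Restricting the resulting bound from $|c|\leq r$ to the points $c=1/b\leq 1/2$ (legitimate once $r\geq 1/2$, and $\rho\approx 0.755>1/2$ leaves room) completes the proof of Theorem~\ref{thm:estimation}.
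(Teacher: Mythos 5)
You correctly identify the crux — a uniform-in-$m$ bound on the tail sum $\Gamma=\sum_{j=1}^{m-4}\binom{m}{j}c^{j+1}\gamma_j\,c^{m-j}w_{m-j}$ — but none of the routes you sketch actually closes it, and you say so yourself: every estimate you write down for $\Gamma$ ends up exponential in $m$ (of size $2^m$, $(3/2)^m$ or $e^{rm}$). The missing idea, which is the engine of the paper's proof, is the telescoping identity
\begin{equation*}
  c^{m+1}\sum_{j=0}^{m-1}\binom{m}{j}\gamma_j
  = c^{m+1}\sum_{d=1}^{b-1}\bigl((d+1)^m-d^m\bigr)
  = c^{m+1}(b^m-1) = c\,(1-c^m)\leq c\leq\tfrac12 ,
\end{equation*}
the same mechanism that produced \eqref{eq:id1}. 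Because of it the apparently huge sum $\sum_j\binom{m}{j}\gamma_j$ is not estimated term by term at all: it is evaluated exactly, and after factoring out the inductive bound $|z_{m-j}|\leq\max_{4\leq p<m}K(p)\,c^4$ (where $z_p=w_p-1-c-\frac12c^2-\frac{p-2}4c^3$) the whole tail is at most $\frac12\max_{4\leq p<m}K(p)\,c^4$. The saved factor $c\leq\frac12$ is what makes the recursion $K(m)=\frac{64}{63}\bigl(\frac12\max_{4\leq p<m}K(p)+Wm^2\bigr)$ contractive and yields $K(m)\leq Zm^2$; without it no induction of the kind you gesture at (your $T\leq K_0+K_1T\cdot(\text{small})$) can start, since the coefficient in front of the inductive constant is not small by any of your estimates.

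Two of your fallback strategies also fail for reasons worth recording. The regime splitting $b\geq m$ versus $b<m$ does not work, as you notice yourself: for $b<m$ the target bound $Km^2c^4$ can be as small as $K/m^2$, so a crude $O(1)$ or $O(m)$ a priori bound on the discrepancy is not absorbed. And the contour-integral or Cauchy-estimate route on a fixed circle $|c|=r$ with $r\geq\frac12$ is blocked for a reason the paper explicitly flags: the inequalities \eqref{eq:id2} and the positivity statements such as $0<c^{m-1}(m-1)\gamma_{m-2}\leq1$, which are needed to extract the exact $c^2$ and $c^3$ coefficients and to keep the error quadratic in $m$, hold only at the arithmetic points $c=1/b$ and are false for general real or complex $c$ in the disc; this is precisely why the theorem is stated, and proved, only for $c=b^{-1}$ with $b$ an integer. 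Finally, note that the paper also needs a separate finer treatment of the term $c^{m+1}m\gamma_{m-1}z_1$ (Taylor with Lagrange remainder, linear dependence on $m$) to recover the coefficients $-\frac12c^2-\frac{4-m}4c^3$ that cancel against the explicit low-order terms of the recurrence; your plan treats the three separated terms only to the precision of the Proposition, which gives the coefficients but not the $O(m^2c^4)$ control of their remainders.
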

\begin{proof}
  On s'intéresse maintenant à l'écart $z_m$ entre $w_m=v_m - ((m+1)c)^{-1}$ et
  les quatre premiers termes du développement en puissances de $c$.
On définit, y-compris pour $0\leq m<4$, $z_m$ par:
\begin{equation*}
  w_m =  1 + c + \frac12 c^2 + \frac{m-2}{4} c^3  + z_m
\end{equation*}
On a par \eqref{eq:taylor} une majoration pour chaque $m\geq4$ en $K(m) c^4$
pour $0\leq c\leq \frac12$ par exemple.

Le problème est l'uniformité en $m$.  Nous l'établissons uniquement pour les
$c$ de la forme $1/b$, $b$ entier au moins $2$.  Supposons donc $m>4$ et qu'on
ait établi une majoration de $|z_p|$ en $K(p)c^4$ pour $4\leq p < m$ pour une
certaine fonction $K(p)$, pour ces $c$.

On obtient la récurrence des $z_m$ à partir de \eqref{eq:wm} (où l'on rappelle $w_0=0$):
\begin{align*}
    w_m &= 1 -  \frac{c^{m}}{m+1} +  c^{m+1}\sum_{j=0}^{m-1}\binom{m}{j} \gamma_j w_{m-j}\\
 1 + c + \frac12 c^2 + \frac{m-2}{4} c^3  +z_m&=1 -  \frac{c^{m}}{(m+1)}  
 \begin{aligned}[t]
   &+ (1 + c + \frac12 c^2-\frac12 c^3) U(m)
   + \frac{c^3}{4} V(m)\\
   &+
   c^{m+1}\sum_{j=0}^{m-1}\binom{m}{j} \gamma_j z_{m-j}
 \end{aligned}
\end{align*}
avec (en utilisant le même raisonnement qui a donné \eqref{eq:id2}):
\begin{equation*}
  U(m) = c^{m+1}\sum_{j=0}^{m-1}\binom{m}{j} \gamma_j =
  c^{m+1}\sum_{d=1}^{b-1} ((d+1)^m-d^m)=c^{m+1}(b^m-1) =c(1 - c^m) 
\end{equation*}
et
\begin{equation*}
  \begin{split}
    V(m) = c^{m+1}\sum_{j=0}^{m-1} (m-j)\binom{m}{j} \gamma_j =
    mc^{m+1}\sum_{j=0}^{m-1} \binom{m-1}{j} \gamma_j 
   \\ =  m c^{m+1}(b^{m-1}+\gamma_{m-1}-1)=mc^2+c\cdot mc^{m}\gamma_{m-1}-mc^{m+1}  
  \end{split}
\end{equation*}
Donc
\begin{equation}\label{eq:zmtaylor}
  (1 - c^m + c^{m+1}) z_m = 
  \begin{aligned}[t]
    &\frac12 c^2 + \frac{4-m}{4} c^3\\
    &-c^{m+1}(1 + c + \frac12 c^2-\frac12 c^3) -  \frac{c^{m}}{(m+1)}-\frac12 c^4+\frac{c^3}{4}V(m)\\
    &+ c^{2} m c^{m-1}\gamma_{m-1} z_1 + c^{3}\binom{m}2 c^{m-2}\gamma_{m-2}z_2
    +c^{m+1}\sum_{j=1}^{m-3}\binom{m}{j} \gamma_j z_{m-j}
  \end{aligned}
\end{equation}
On a pour nos $c$ et $m\geq5$: $0<V(m)< m c^2 + c \leq (\frac12 m +
1)c$, qui est multiplié par $c^3/4$, donc une majoration en $B_0 m c^4$
avec $B_0$ une constante absolue.  Les autres termes de la ligne intermédiaire
donnent une contribution majorée par $A_0 c^4$ pour une certain
$A_0$, pour $m\geq5$.  Donc en tout une majoration en $C_0 m c^4$.

Considérons ensuite la dernière somme, toujours pour $m\geq5$, mais en
s'arrêtant à $j=m-4$.  On peut majorer cette sous-somme, en valeur absolue,
par
\begin{equation*}
  (\max_{4\leq p < m} K(p)c^4) \cdot c^{m+1}\sum_{j=1}^{m-4} \binom{m}{j}\gamma_j
\end{equation*}
Pour la contribution de $j=m-3$, il se trouve que $z_3$ possède un zéro triple
à l'origine.  Donc il existe une constante absolue $C_3$ telle que $|z_3|\leq
C_3 |c|^3$ sur le disque de rayon $\frac12$.  En particulier ça vaut pour nos
$c$ inverses d'entiers, et comme $c|z_3|\leq C_3 c^4$ on peut combiner ce
terme à la somme déjà majorée en mettant en facteur non plus $c^{m+1}$ mais
$c^m$ et donc avec $K_3(m) = \frac12\max_{4\leq p < m} K(p) + C_3$ (on fait
ici la somme plutôt qu'un maximum car c'est plus simple pour ajouter les
autres termes d'erreur par la suite) on obtient une majoration:
\begin{equation*}
  K_3(m)c^4 c^m \sum_{j=1}^{m-3} \binom{m}{j}\gamma_j
  \begin{aligned}[t]
    &\leq 
    K_3(m)c^4 c^m \sum_{j=0}^{m-1} \binom{m}{j}\gamma_j< K_3(m) c^4
  \end{aligned}
\end{equation*}
Regardons maintenant le terme $c^{m+1}m(m-1)\gamma_{m-2} z_2$.  Il se trouve
que $z_2$ a un zéro double à l'origine.  Et on sait que $c^{m-1} (m-1)
\gamma_{m-2}$ est positif et majoré par $1$ (pour nos $c$!).  Donc on obtient ici une
majoration $C_2 m c^4$ avec une certaine constante absolue $C_2$.

En ce qui concerne le terme $c^{m+1}m \gamma_{m-1} z_1$ on procède plus
finement.  Ce terme contribue réellement au développement de Taylor de $z_m$
jusqu'à l'ordre $3$ inclus.  En effet $z_1 = -\frac12 c  - c^2 + O(c^3)$.
Et d'après \eqref{eq:id2} on sait que
\begin{equation*}
  c^{m}m\gamma_{m-1} = 1 -  \frac m2 c  + \theta \frac{m(m-1)}{12} c^2, \quad 0\leq \theta\leq 1
\end{equation*}
On attire l'attention du lecteur sur le fait que ceci serait faux si l'on
autorisait à $c$ des valeurs autres que celles spécifiées dans l'énoncé.

Ainsi nous obtenons du produit $c^{m+1}m\gamma_{m-1}z_1$ tout d'abord
$cz_1(1 - \frac m2 c)$ puis un terme supplémentaire qui est
majoré par un $C_1 m^2 c^4$ pour $C_1$ une constante absolue.
On a de plus avec une autre constante absolue $C_1'$:
\begin{equation*}
\left| cz_1(1 - \frac m2 c) -( -\frac12 c^2(1+\frac{4-m}2 c))\right| \leq C_1' m c^4
\end{equation*}
En effet on peut évaluer  par la formule de Taylor avec reste de
Lagrange qui fera intervenir une dérivée quatrième sur l'intervalle
$[0,\frac12]$ et comme la fonction considérée $cz_1- \frac m2 z_1c^2$ dépend
linéairement de $m$, on a le résultat ci-dessus.

Ceci donne donc une contribution $-\frac12 c^2 -\frac{4-m}4 c^3 +
O(mc^4)$ qui vient opportunément compenser les seuls termes qui subsistaient
dans \eqref{eq:zmtaylor} à l'ordre trois inclus.

Toutes les erreurs cumulées donnent un multiple $K_0(m)c^4$ avec $K_0(m)$
obtenu en additionnant à la moitié du maximum des $K(p)$ précédents une
quantité $T+Um+Vm^2$ avec des constantes absolues $T,U,V$, que l'on peut donc
englober dans un $Wm^2$: pour $m\geq5$, $K_0(m)= Wm^2 + \frac 12 \max_{4\leq p<m} K(p)$.

Mais il reste à prendre en compte le facteur multiplicateur $(1
-c^m(1-c))^{-1}$.  Pour $m\geq5$ et $c\in[0,\frac12]$ on peut le majorer par
$64/63$.  Ainsi une définition récurrente convenable de $K(m)$ est:
\begin{equation}\label{eq:Km}
  m\geq5\implies K(m) = \frac{64}{63}(\frac12\max_{4\leq p <m} K(p) + W m^2)
\end{equation}
Soit maintenant $Z$ une constante choisie pour qu'à la fois $|z_4|\leq 16 Z
c^4$ est vrai pour $|c|\leq \frac12$ et par ailleurs $W\leq \frac{31}{64} Z$.
Alors les quantités $K(m)$ définies par $K(4)=16 Z$ et la récurrence
\eqref{eq:Km} pour $m\geq5$ vérifient $K(m)\leq Zm^2$.  Et nous avons prouvé
$|z_m|\leq K(m)c^4$ pour $c=1/b$, $b\in\NN$, $b\geq2$.  Ceci conclut la
preuve.
\end{proof}

\section{Fin de la démonstration}

On passe maintenant à la démonstration du Théorème \ref{thm:1}.  On a donc
d'après \eqref{eq:K} et le Théorème \ref{thm:estimation}:
\begin{equation*}
  K(b,b-1) = \sum_{d=1}^{b-2}\frac b{d+1}
  \begin{aligned}[t]
    &+ (v_1 - \frac12 b - 1 - b^{-1} - \frac12 b^{-2} + \frac14 b^{-3})\sum_{d=1}^{b-2}\frac{1}{(d+1)^2}\\
    &+ (v_2 - \frac13 b - 1 - b^{-1} - \frac12 b^{-2}  )\sum_{d=1}^{b-2}\frac{1}{(d+1)^{3}}\\
    &+ (v_3 - \frac13 b - 1 - b^{-1} - \frac12 b^{-2}  - \frac14 b^{-3})\sum_{d=1}^{b-2}\frac{1}{(d+1)^{4}}\\
    &+ \sum_{d=1}^{b-2}\sum_{m=1}^\infty \frac{b(m+1)^{-1} + 1 + b^{-1} + \frac12 b^{-2} + \frac{m-2}4 b^{-3}+O(m^2 b^{-4})}{(d+1)^{m+1}}
  \end{aligned}
\end{equation*}
Comme $ \sum_{m=1}^\infty m ^2 (d+1)^{-m-1} = d^{-2}+2d^{-3} $ les termes en $O$ donnent
une contribution totale qui est $O(b^{-4})$. Le terme initial et les séries
infinies donnent pour chaque $d$:
\begin{equation*}
  - b \log(1-\frac1{d+1}) + (1+ b^{-1} + \frac12 b^{-2})\frac{1}{d(d+1)} 
  + \frac14 b^{-3} \left(\frac1{d^2}- \frac2{(d+1)d}\right)
\end{equation*}
et en sommant sur les $d\in\{1,\dots,b-2\}$ cela donne
\begin{equation*}
  b \log(b-1) + (1+ b^{-1} + \frac12 b^{-2} - \frac12 b^{-3})(1 - \frac1{b-1})  +\frac1{4b^3}\sum_{d=1}^{b-2}\frac1{d^2}
    =b\log(b) - \frac1{2b} - \frac{11}{6b^2} -\frac{13-\zeta(2)}{4b^3}+ O(b^{-4})
\end{equation*}
Par ailleurs on a
\begin{align*}
  v_1 -\frac12 b - 1 - \frac1b - \frac1{2b^2} + \frac1{4b^3} 
&= -\frac1{2b} - \frac1{b^2} - \frac3{4b^3} + O(b^{-4})\\
  v_2 - \frac13 b - 1 - \frac1b - \frac1{2b^2} 
&=-\frac1{3b^2} -\frac1{b^3} + O(b^{-4})\\
v_3 - \frac14 b - 1 - \frac1b - \frac1{4b^3}
&= -\frac1{4b^3} + O(b^{-4})
\end{align*}
Donc à ce stade nous avons au total
\begin{equation*}
  \begin{aligned}[t]
    b\log(b) &- \frac1{2b} - \frac{11}{6b^2} - \frac{13-\zeta(2)}{4b^3} + O(b^{-4}) \\
    &-\Bigl(\frac1{2b} + \frac1{b^2} + \frac3{4b^3} + O(b^{-4})\Bigr)\Bigl(\zeta(2)-1-\sum_{n=b}^\infty\frac 1{n^2}\Bigr)\\
    &-\Bigl(\frac1{3b^2} + \frac1{b^3} + O(b^{-4})\Bigr)(\zeta(3) - 1 + O(b^{-2}))\\
    &-\frac1{4b^3} (\zeta(4) - 1 + O(b^{-3}))
  \end{aligned}
\end{equation*}
Et par ailleurs $\sum_{n=b}^\infty\frac
1{n^2}=\frac1{b-\frac12} + O(b^{-3}) = b^{-1} + \frac12{b^{-2}} + O(b^{-3})$.

Après avoir tenu compte de tous ces termes, la formule \eqref{eq:main} du Théorème
\ref{thm:1} apparaît.

%
%
La constante implicite est effective et on pourrait donner une majoration
explicite de l'erreur.  Dans la pratique voici quelques résultats numériques,
les calculs des sommes de Kempner étant faits soit avec le code de
Baillie \cite{baillie1979,baillie2008} soit avec celui de l'auteur
\cite{burnolkempner,burnolirwin} qui utilise la formule \eqref{eq:K} ou plutôt
par défaut la variante de \cite{burnolkempner} donnant une série alternée
spéciale.
\[\def\arraystretch{1.5}
\begin{array}{r|l|l|l}
b&10&100&1000\\\hline\hline
                            b\log(b)&\np{23.025850930}&\np{460.517018599}&\np{6907.755278982137}\\
                b\log(b)-\frac{c_1}b&\np{22.943604227}&\np{460.508793928}&\np{6907.754456515104}\\
b\log(b)-\frac{c_1}b-\frac{c_2}{b^2}&\np{22.923148030}&\np{460.508589367}&\np{6907.754454469484}\\
b\log(b)-\frac{c_1}b-\frac{c_2}{b^2}-\frac{c_3}{b^3}&
                                    \np{22.920852925}&\np{460.508587071}&\np{6907.754454467189}\\
\hline
K&\np{22.920676619}&\np{460.508587055}&\np{6907.754454467187}
\end{array}
\]


%




\bigskip
\NoAutoSpacing

\singlespacing
\providecommand\bibcommenthead{}
\def\blocation#1{}

\end{document}